\theoremstyle{plain}
\newtheorem{theo}{Theorem}[section]
\newtheorem{cor}[theo]{Corollary}
\newtheorem{theorem}[theo]{Theorem}
\newtheorem{lemma}[theo]{Lemma}
\newtheorem{remark}[theo]{Remark}
\theoremstyle{definition}
\newtheorem{defi}[theo]{Definition}
\theoremstyle{remark}
\def\CC{{\mathbb C}}
\def\NN{{\mathbb N}}
\begin{document}

\title[Holomorphic Cartan geometries]{Killing fields of holomorphic Cartan geometries}

\author[S. Dumitrescu]{Sorin DUMITRESCU$^\star$}

\address{${}^\star$ D\'epartement de Math\'ematiques d'Orsay, 
\'equipe de Topologie et Dynamique,
Bat. 425, U.M.R.   8628  C.N.R.S.,
Univ. Paris-Sud (11),
91405 Orsay Cedex, France}
\email{Sorin.Dumitrescu@math.u-psud.fr}
 
 \thanks{This work was  partially supported by  the ANR Grant Symplexe BLAN 06-3-137237}
\keywords{complex manifolds-Cartan geometries-transitive Killing Lie algebras.}
\subjclass{53B21, 53C56, 53A55.}
\date{\today}

\setcounter{tocdepth}{1}

\maketitle

\begin{abstract}    We study local automorphisms of holomorphic Cartan geometries. This leads to  classification results for compact complex manifolds admitting
holomorphic Cartan geometries. We prove that a compact K\"ahler Calabi-Yau manifold bearing a holomorphic Cartan geometry  of  algebraic type admits a finite unramified cover which is a complex torus.\\ 
\end{abstract}

\section{Introduction}

 We study here holomorphic Cartan geometries on complex compact manifolds $M$. 
 
  Let $G$ be a complex connected  Lie group and $P \subset G$ a closed complex Lie subgroup. The Lie algebras of $G$ and $P$ will be denoted by $\mathfrak{g}$ and $\mathfrak p$.

 \begin{defi}  A holomorphic Cartan geometry  $(B, \omega)$  on  $M$   modeled on $G/P$ is a  holomorphic principal right $P$-bundle  $B$ over $M$ endowed with a holomorphic
$\mathfrak g$-valued  one form $\omega$  satisfying:\\
 
 (i)  $\omega_{b} : T_{b}M \to \mathfrak g$ is a linear complex isomorphism  for all $b \in B$.\\

 (ii)  If $X \in \mathfrak  p$ and $X^{\star}$ is the corresponding fundamental vector field on $B$, then $\omega_{b} (X^{\star})=X$, for all $b \in B$.\\

 (iii) $(R_{g})^* \omega =Ad(g^{-1}) \omega$, for all $g \in P$, where $R_{g}$ is the right action on $B$ of $g \in P$.\\
 \end{defi}
 
  If the image of $P$ through  the adjoint representation is  an algebraic subgroup of $Aut(\mathfrak{g})$, the Cartan geometry is said to be  of {\it algebraic type}.

  Recall that a local Killing field  of  the Cartan geometry  is a 
 local holomorphic vector field  on $M$  which  lifts to a vector field on $B$  acting by bundle automorphisms and preserving  $\omega$.  Denote by $Kill^{loc}$ the Lie algebra of local Killing fields (recall that the sheaf of Killing fields on $M$ is locally constant~\cite{Gro} (section 3.5)).
 If $Kill^{loc}$ admits an open orbit $U$ in $M$, we say that $(B, \omega)$ is {\it locally homogeneous} on $U$.

 We show that, in many situations,  a  Cartan geometry  of algebraic type admits  a non trivial algebra $Kill^{loc}$ of  local Killing fields.  This is  inspired by  the celebrated
 stratification  theorem proved by Gromov in the context of {\it rigid geometric structures of algebraic type}~\cite{DG,Gro} (see also the nice Cartan geometries-adapted proof  given by Karin Melnick~\cite{Melnick}). Our result is:

\begin{theorem} \label{result}
Let $M$ be a  compact connected complex manifold of dimension $n$ endowed with  a holomorphic Cartan geometry  $(B,\omega)$ of algebraic type. 
Then :

(i) There exists a compact  analytic subset $S$ of $M$, such that $M \setminus S$ is $Kill^{loc}$-invariant and the orbits of $Kill^{loc}$ in $M \setminus S$
are the fibers of a holomorphic fibration of constant rank.

(ii) For any distinct fibers of the previous fibration there exists  a fibration-invariant meromorphic function on $M$ taking distinct values on them. Consequently,  the dimension of the fibers  is $\geq n-a(M)$, where $a(M)$ is the algebraic dimension of $M$.
\end{theorem}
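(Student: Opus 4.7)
The plan is to adapt Gromov's stratification theorem for rigid geometric structures of algebraic type to the Cartan setting, following Melnick's formulation. The starting point is the \emph{curvature function}: the curvature $K=d\omega+\tfrac{1}{2}[\omega,\omega]$ of $\omega$ is horizontal, and the trivialisation of $TB$ given by $\omega$ encodes $K$ as a $P$-equivariant holomorphic map $\kappa^{(0)}:B\to \mathrm{Hom}(\Lambda^{2}\mathfrak{g},\mathfrak{g})$. Taking covariant derivatives along $\omega$ up to a large enough order $N$, one assembles a single $P$-equivariant holomorphic map $\kappa:B\to W$ into a finite-dimensional $P$-module $W$. The hypothesis that $(B,\omega)$ is of algebraic type precisely means that the $P$-action on all such tensor constructions, and in particular on $W$, is algebraic.

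For part (i), the Cartan-geometric version of Gromov's integrability theorem asserts that, for $N$ large enough depending only on $\dim\mathfrak{g}$, a local holomorphic vector field $Y$ on $B$ is the lift of a local Killing field of $M$ iff $\mathcal L_Y \kappa$ vanishes at a single point to a controlled jet order. Combined with the constant-rank theorem, this implies that on the Zariski open set $B^{\circ}\subset B$ where $\kappa$ has locally constant rank, two points of $B^{\circ}$ lie in the same lifted $Kill^{loc}$-orbit iff their $\kappa$-images lie in the same $P$-orbit of $W$. Setting $S:=\pi(B\setminus B^{\circ})$, where $\pi:B\to M$, one checks that $S$ is a proper analytic subset and that the orbits of $Kill^{loc}$ on $M\setminus S$ form a regular holomorphic foliation of constant rank.

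For part (ii), Rosenlicht's theorem produces a $P$-invariant Zariski open subset $W^{\circ}\subset W$ on which the rational $P$-invariants separate $P$-orbits. Pulling back such an invariant by $\kappa$ yields a $P$-invariant meromorphic function on $B$, which descends to a meromorphic function on $M$; by construction it is constant on the leaves of the foliation of (i). Two distinct orbits in $M\setminus S$ correspond to two distinct $P$-orbits in $W^{\circ}$, hence are separated by some rational invariant, so the descended function distinguishes them. Assembling finitely many of these one obtains a meromorphic map $\varphi:M\to Z$ whose indeterminacy locus lies in $S$ and whose general fibre in $M\setminus S$ is a $Kill^{loc}$-orbit; this realises the foliation as a holomorphic fibration on $M\setminus S$. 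The components of $\varphi$ span a subfield of $\mathbb{C}(M)$ of transcendence degree at most $a(M)$, so the codimension of a generic fibre is at most $a(M)$ and its dimension is at least $n-a(M)$.

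The main obstacle is the integrability step itself: one must choose the jet order $N$ uniformly and promote an infinitesimal automorphism of the finite jet of $\kappa$ to a genuine Killing field by a Frobenius-type argument in the holomorphic category, crucially using the algebraic-type hypothesis so that $P$-orbits in $W$ are locally closed. A more technical but nontrivial point is to verify that the rank stratification of $\kappa$ really does descend to a closed analytic subset $S\subset M$, and that the local Rosenlicht invariants extend to honest meromorphic (rather than merely locally rational) functions on the compact manifold $M$; compactness of $M$ is what controls indeterminacy and forces the $a(M)$-bound.
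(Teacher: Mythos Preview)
Your proposal is correct and follows essentially the same route as the paper: the $P$-equivariant curvature-jet map $\kappa=J^mK:B\to W$, Melnick's Frobenius-type theorem identifying $Kill^{loc}$-orbits with $P$-orbits in the image, and Rosenlicht's theorem to produce the separating meromorphic invariants and the $a(M)$ bound. One small refinement: the paper defines the bad set $S$ as the complement of the locus where $J^mK$ has constant rank \emph{and} the image of $B$ lands in the top Rosenlicht stratum $Z_i\setminus Z_{i+1}$, whereas you impose only the rank condition; you need both so that the $P$-orbits in the image have constant dimension (giving a genuine constant-rank fibration on $M\setminus S$) and so that the rational $P$-invariants from Rosenlicht are defined on all of $\kappa(B|_{M\setminus S})$.
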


 \begin{cor} If  $a(M)=0$, then $(B, \omega)$ is locally homogeneous on an open dense set in $M$.
 \end{cor}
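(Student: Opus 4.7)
The plan is to read the corollary off Theorem~\ref{result} almost tautologically, using the fact that $a(M)=0$ forces every meromorphic function on $M$ to be constant.

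First I would invoke the dimension bound in part~(ii): the fibers of the $Kill^{loc}$-orbit fibration on $M\setminus S$ have dimension at least $n-a(M)=n$. Since $\dim M = n$, each such fiber is therefore an open subset of $M$, so $Kill^{loc}$ already possesses open orbits on $M\setminus S$.

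Next I would use the separation statement in~(ii) a second time: if the fibration on $M\setminus S$ had two distinct fibers, there would exist a fibration-invariant meromorphic function on $M$ taking two different values on them, hence a non-constant meromorphic function on $M$, contradicting $a(M)=0$. Therefore the fibration has a single fiber, so $M\setminus S$ is itself one $Kill^{loc}$-orbit.

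Finally, since this unique fiber has dimension $n$ it is nonempty, hence $S$ is a proper compact analytic subset of $M$; such a set has empty interior, so $U:=M\setminus S$ is open and dense in $M$. The set $U$ is an open orbit of $Kill^{loc}$, which by the definition recalled in the introduction means that $(B,\omega)$ is locally homogeneous on $U$. I do not expect any genuine obstacle here: the corollary follows directly from Theorem~\ref{result} once one observes that the hypothesis $a(M)=0$ rules out the existence of non-constant fibration-invariant meromorphic functions on $M$.
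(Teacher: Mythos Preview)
Your argument is correct and is exactly the intended deduction: the paper states this corollary without a separate proof because it is immediate from Theorem~\ref{result}, and your write-up spells out precisely that step. One minor cosmetic point: your justification that $S\neq M$ (``the unique fiber has dimension $n$, hence is nonempty'') is slightly circular, since speaking of fibers already presupposes $M\setminus S\neq\emptyset$; it is cleaner to observe that in the proof of Theorem~\ref{result} the set $U=M\setminus S$ is constructed as an open \emph{dense} subset, so $S$ is automatically nowhere dense. Your use of the separation clause in~(ii) to rule out several fibers is fine; one could equivalently note that $M\setminus S$ is connected (complement of a proper analytic set in a connected manifold) and that open orbits partitioning a connected set must coincide.
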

 
 \begin{cor} \label{corollaire}  Let $M$ be a compact simply connected complex manifold of positive dimension  with trivial canonical bundle which doesn't admit nonconstant meromorphic functions. Then $M$ doesn't admit holomorphic Cartan geometries of algebraic type.
 \end{cor}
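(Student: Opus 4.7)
The plan is to argue by contradiction: suppose $M$ admits a holomorphic Cartan geometry $(B,\omega)$ of algebraic type. Since $a(M)=0$, the preceding corollary gives that $(B,\omega)$ is locally homogeneous on a dense open subset $U \subset M$, i.e.\ the evaluation of $Kill^{loc}$ at each $m \in U$ surjects onto $T_m M$.

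The first step is globalization. Because the sheaf of Killing fields is locally constant (as recalled in the introduction) and $M$ is simply connected, it is in fact a constant sheaf on $M$; so every germ of a local Killing field extends uniquely to a Killing vector field defined on all of $M$. I will then pick $n = \dim M$ global Killing fields $X_{1}, \dots, X_{n}$ whose values at a chosen $m_{0} \in U$ form a basis of $T_{m_{0}}M$. Using the trivial canonical bundle, fix a nowhere vanishing holomorphic $n$-form $\Omega$; then $f = \Omega(X_{1}, \dots, X_{n})$ is a holomorphic function on compact $M$, hence constant, and $f(m_{0}) \neq 0$, so $X_{1}, \dots, X_{n}$ trivialize $TM$. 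Thus $M$ is holomorphically parallelizable.

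Since the $X_{i}$ frame $M$, the brackets $[X_{i}, X_{j}]$ can be written as $\sum_{k} c_{ij}^{k} X_{k}$ with holomorphic coefficients $c_{ij}^{k}$, and compactness forces each $c_{ij}^{k}$ to be constant. Hence $\mathfrak{h} := \mathrm{span}_{\CC}(X_{1},\dots,X_{n})$ is a finite-dimensional complex Lie subalgebra of global Killing fields. Holomorphic vector fields on a compact complex manifold are complete, so Palais's theorem integrates $\mathfrak{h}$ to a holomorphic action on $M$ of the simply connected complex Lie group $H$ with $\mathrm{Lie}(H)=\mathfrak{h}$. The action is transitive and locally free, hence free, so $M \cong H/\Gamma$ with $\Gamma$ a discrete stabilizer; since both $H$ and $M$ are simply connected, $\Gamma = \{e\}$ and $M$ is biholomorphic to $H$. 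Now $H$ is a compact connected complex Lie group, so its adjoint representation $\mathrm{Ad}\colon H \to GL(\mathfrak{h})$ is a holomorphic map from a compact complex manifold to an affine variety, hence constant. Thus $H$ is abelian, and being compact and connected it is a complex torus $\CC^{n}/\Lambda$; then $\pi_{1}(M) \cong \Lambda \neq 0$ since $\dim M \geq 1$, contradicting simple connectedness.

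The main obstacle lies not in any single step---each of which is fairly classical---but in assembling them correctly: simple connectedness must be used \emph{twice}, first to globalize local Killing fields via the locally constant sheaf, and then to identify $M$ with $H$ itself rather than a proper quotient. The remaining ingredients (completeness of holomorphic vector fields on compact manifolds, Palais integration, and the abelianness of compact complex Lie groups, in the spirit of Wang's classification of parallelizable compact complex manifolds) are standard and require no special input from the Cartan geometry beyond the conclusion of the first corollary.
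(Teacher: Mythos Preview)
Your proof is correct and follows the same strategy as the paper's: local homogeneity from $a(M)=0$, globalization of Killing fields via simple connectedness, trivialization of $TM$ via the holomorphic volume form, and the contradiction that $M$ would be a positive-dimensional compact simply connected complex Lie group. The only difference is cosmetic: where the paper invokes Wang's theorem and the non-existence of such Lie groups as black boxes, you unpack these facts by hand (constant structure constants, Palais integration, the adjoint-representation argument forcing abelianness).
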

 
 This enables us to prove the following result which was our main motivation:
 
 \begin{theorem}  \label{on Calabi-Yau} A compact K\"ahler  Calabi-Yau manifold  $M$ bearing a  holomorphic Cartan geometry  of algebraic type admits a finite unramified cover which is a complex  torus.
 \end{theorem}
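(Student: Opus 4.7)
My plan is to combine the Beauville--Bogomolov decomposition of compact K\"ahler Calabi-Yau manifolds with Bochner vanishing for holomorphic vector fields and the orbit structure from Theorem~\ref{result}.

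By Beauville--Bogomolov, there is a finite \'etale cover $\widetilde M \to M$ that splits holomorphically as $\widetilde M = T \times Z$ with $T$ a complex torus and $Z$ compact simply connected K\"ahler with trivial canonical bundle (a product of irreducible strict Calabi-Yau and hyperk\"ahler factors). The Cartan geometry pulls back to a holomorphic Cartan geometry of algebraic type on $\widetilde M$, and the theorem reduces to showing $\dim Z = 0$.

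The analytic core of the argument is that every local Killing field on $\widetilde M$ is tangent to the $T$-factor. Since the Killing sheaf is locally constant and the universal cover $\widetilde M^u = \CC^k \times Z$ (with $k = \dim T$) is simply connected, every local Killing field on $\widetilde M$ extends to a global holomorphic vector field on $\widetilde M^u$. Yau and Bochner give $H^0(Z, T_Z) = 0$, since the holonomy of each irreducible factor of $Z$ lies in $SU(n_i)$ or $Sp(n_j/2)$ and fixes no non-zero vector. Compactness of $Z$ and the maximum principle applied fibrewise then force every holomorphic vector field on $\CC^k \times Z$ to have the form $\xi(w,z) = f(w)\,\partial_w$ for some holomorphic $f : \CC^k \to \CC^k$. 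Consequently all $Kill^{loc}$-orbits on $\widetilde M$ lie inside slices $T \times \{z\}$, so the orbit fibration produced by Theorem~\ref{result} factors through the projection $\widetilde M \to Z$; fibration-invariant meromorphic functions separating distinct fibres (Theorem~\ref{result}(ii)) descend to meromorphic functions on $Z$ separating its generic points, forcing $a(Z) = \dim Z$, i.e.\ $Z$ projective.

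The main obstacle I expect is closing the argument in this remaining case, where $Z$ would be a non-trivial projective simply connected K\"ahler Calabi-Yau. My plan is to use that the Killing Lie algebra acts by parallel (hence pairwise commuting) vector fields, integrate it to an infinitesimal torus action preserving the Cartan geometry on $\widetilde M$, and descend the Cartan connection along this action to obtain an induced geometry of algebraic type on $Z$; Corollary~\ref{corollaire}, applied to a general fibre of the algebraic reduction of $Z$ where the algebraic dimension becomes zero, should then contradict $\dim Z > 0$.
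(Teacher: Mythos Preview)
Your reduction to the case where $Z$ is projective is essentially correct and close to the paper's, though you run the logic in the contrapositive direction: the paper argues that if some simply connected factor is nonprojective then meromorphic functions cannot separate points of $Z$, so by Theorem~\ref{result} a local Killing field must have a nonzero $Z$-component, contradicting $H^0(Z,TZ)=0$. One small slip: your fibration-invariant meromorphic functions are constant on $Kill^{loc}$-orbits, not on full $T$-slices, so they do not literally ``descend to $Z$''; what you should say is that their restrictions to a generic slice $\{t\}\times Z$ are meromorphic on $Z$ and still separate generic $z_1\neq z_2$, which is enough to force $a(Z)=\dim Z$.

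The genuine gap is your treatment of the projective case. None of the three steps in your plan is justified, and the last is actually incoherent:
\begin{itemize}
\item You assert that the Killing fields are ``parallel (hence pairwise commuting)''. Parallel with respect to which connection? You have only shown they are of the form $f(w)\partial_w$ on $\CC^k\times Z$; there is no reason for $f$ to be constant, and no reason for the Killing algebra to be abelian.
\item Even granting a torus action by automorphisms, ``descending the Cartan connection along this action to obtain an induced geometry on $Z$'' is not a standard construction; a Cartan geometry on $T\times Z$ does not restrict or descend to one on $Z$ without further structure.
\item Your final move, applying Corollary~\ref{corollaire} to ``a general fibre of the algebraic reduction of $Z$ where the algebraic dimension becomes zero'', cannot work: you have just shown $Z$ is projective, so its algebraic reduction is the identity and the fibres are points.
\end{itemize}

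The paper handles the projective case by a completely different, connection-theoretic argument that does not involve Killing fields at all. The bundle $B_{\mathfrak g}$ over $\widetilde M$ carries a holomorphic affine connection (Lemma~\ref{connection}); restricting to a slice $Z\times\{t\}$ and invoking Biswas's theorem (a holomorphic bundle with holomorphic connection over a projective manifold whose tangent bundle has nonnegative degree admits a \emph{flat} holomorphic connection) makes $\bar B_{\mathfrak g}$ flat, hence trivial since $Z$ is simply connected. The surjection $\bar B_{\mathfrak g}\twoheadrightarrow TZ$ then globally generates $TZ$; triviality of $K_Z$ forces a global frame, and Wang's theorem makes $Z$ a compact simply connected complex Lie group, which is a point. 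This is the missing ingredient you need for the projective case.
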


 Benjamin McKay conjectured in~\cite{McKay} that compact K\"ahler  Calabi-Yau manifolds bearing holomorphic Cartan geometries are holomorphically covered by complex tori. Theorem~\ref{on Calabi-Yau}  answers positively  this  conjecture   in  the case of Cartan geometries of algebraic type.  In a recent  paper~\cite{BM}, Indranil Biswas and Benjamin McKay proved the conjecture in the case
 where $M$ is a {\it projective} Calabi-Yau manifold. 
 
 Note that the particular case where $P$ is the complex linear group $GL(n, \CC)$ sitting  in the complex affine group $G= GL(n,\CC) \ltimes \CC^n$ corresponds to a holomorphic affine
 connexion on the holomorphic tangent bundle $TM$. This special case was proved in~\cite{IKO}. The conjecture  was also solved in~\cite{McKay} for  the particular case where $P$ is parabolic or reductive. 
 
 A similar result was proved in~\cite{D1} for holomorphic rigid geometric structures of algebraic affine type  in Gromov's sense~\cite{DG,Gro}, but the context here  is different  since 
  the principal bundle $B$  of a Cartan geometry  is not supposed to be a   frame  bundle of the manifold $M$.
 
We give now the main steps  in   the proof of theorem~\ref{on Calabi-Yau} in   the case where $M$ is a nonprojective Calabi-Yau manifold. If $M$ is  K\"ahler but nonprojective, a result of Moishezon~\cite{Moishezon} implies that
 the algebraic dimension of $M$ is not maximal and theorem~\ref{result}  implies  that any  Cartan geometry on $M$ admits nontrivial local Killing fields. We use then a structure theorem which asserts that, up to a finite cover, $M$  is biholomorphic to a 
 direct product of a simply connected Calabi-Yau manifold with a complex torus~\cite{Beauville} and a
  result of Amores-Nomizu~\cite{Amores, Nomizu}
 about the extendibility of local Killing fields on simply connected manifolds (see also~\cite{DG,Gro,Melnick}).
 
 In the case where $M$ is projective our proof works also for Cartan geometries which are not of algebraic type. It is  a  very simplified version of that given in~\cite{BM}. Actually,  we obtain the
 more general:
 
 \begin{theorem}  \label{produits} Let $N$ be a complex manifold.
 
 (i) If $M$ is a compact simply connected K\"ahler  Calabi-Yau manifold of positive dimension, then $M \times N$ doesn't  admit holomorphic Cartan geometries of algebraic type.
 
 (ii) If $M$ is a compact simply connected projective   Calabi-Yau manifold of positive dimension, then $M \times N$ doesn't  admit holomorphic Cartan geometries.
 
  (iii) If $M$ is a compact simply connected complex  manifold of positive dimension  with  trivial canonical bundle $K_{M}$ (or such that $K_{M}^{-1}$ doesn't admit nontrivial holomorphic sections), then $M \times N$ doesn't  admit flat holomorphic Cartan geometries.
  
  Moreover, the previous three points stand not only for $M \times N$, but also for complex manifolds $W$ containing $M$ as a submanifold such that 
  there exists a holomorphic vector bundle morphism from $TW|_{M}$ onto $TM$.
  \end{theorem}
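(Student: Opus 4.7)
The plan is, in each case, to assume the existence of a holomorphic Cartan geometry $(B,\omega)$ on $W$ and derive a contradiction by producing enough global holomorphic sections of $TM$ or of $K_M^{-1}$ on $M$ itself to violate the stated hypothesis on $M$. The only feature of the embedding $M\subset W$ that will enter is the bundle surjection $TW|_M\twoheadrightarrow TM$, so the \emph{moreover} clause is automatic.

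For (iii), flatness of the Cartan geometry provides a developing map $\mathrm{dev}:\widetilde W\to G/P$. Since $M$ is simply connected, the inclusion $M\hookrightarrow W$ lifts to $\widetilde W$, giving a holomorphic $f:M\to G/P$; as $\mathrm{dev}$ is a local biholomorphism, its differential identifies $TW|_M$ with $f^{*}T(G/P)$, which composed with the hypothesized surjection $TW|_M\twoheadrightarrow TM$ yields a surjection $f^{*}T(G/P)\twoheadrightarrow TM$. The $G$-action on $G/P$ supplies a surjection $\mathfrak{g}\otimes\mathcal O_{G/P}\twoheadrightarrow T(G/P)$, so after pullback to $M$ the bundle $TM$ becomes globally generated by finitely many holomorphic sections. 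Wedging $n=\dim M$ of them at a point produces a global holomorphic section of $K_M^{-1}$ that is nonzero there, contradicting the hypothesis that $K_M^{-1}$ admits no nontrivial section; in the alternative case that $K_M$ is trivial, the same global generation together with $c_1(TM)=0$ forces $TM$ to be holomorphically trivial, so $M$ would be a compact simply connected parallelizable complex manifold of positive dimension, which is impossible since such manifolds are complex tori.

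For (i) and (ii) the developing map is unavailable and must be replaced. In (i), the algebraic-type hypothesis together with the local-Killing-field theory behind Theorem~\ref{result} (i.e.\ Gromov's rigidity for algebraic geometric structures, adapted to the Cartan setting) should produce a nontrivial sheaf of local Killing fields in a neighborhood of $M$ inside $W$; since $M$ is simply connected, the Amores--Nomizu extension theorem propagates these local Killings to global holomorphic vector fields, and pushing them down through $TW|_M\twoheadrightarrow TM$ yields nontrivial holomorphic vector fields on $M$. This contradicts $H^{0}(M,TM)=0$, which holds for a simply connected K\"ahler Calabi--Yau of positive dimension by Bochner's theorem applied to the Ricci-flat K\"ahler metric together with the Beauville--Bogomolov decomposition. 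For (ii), projectivity of $M$ would allow, via GAGA, algebraic substitutes for the fundamental vector fields of the flat case on the restricted bundle $B|_M\to M$, leading to the same style of argument as in (iii); this is the ``very simplified'' version of the Biswas--McKay proof alluded to in the introduction.

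The main obstacle will lie in (i) and (ii), where the Cartan-geometric information on the possibly non-compact ambient $W$ must be turned into \emph{global} holomorphic data on $M$ without the direct geometric availability of the developing map: in (i) this requires the local-to-global rigidity coming from algebraic type on a neighborhood of $M$, while in (ii) it requires a careful GAGA-type algebraization of $B|_M$ that still delivers the key global-generation property of $TM$.
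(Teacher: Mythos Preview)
Your argument for (iii) is correct and is essentially a rephrasing of the paper's: where you use the developing map to identify $TW|_M$ with a quotient of $\mathfrak g\otimes\mathcal O_M$, the paper observes directly that the associated bundle $\bar B_{\mathfrak g}=B\times_P\mathfrak g\,|_M$ carries a \emph{flat} holomorphic connection (Lemma~\ref{connection} and Remark~\ref{hyp}(ii)), hence is trivial on the simply connected $M$, and surjects onto $TM$. The conclusion (nontrivial section of $K_M^{-1}$, or trivialisation of $TM$ and Wang's theorem) is the same; your phrase ``$c_1(TM)=0$ forces $TM$ trivial'' should really be the argument that a nonzero section of the trivial line bundle $K_M^{-1}$ is nowhere vanishing.

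The genuine gap is in (ii), and this propagates to (i). Your invocation of GAGA is not the mechanism. What the paper actually uses is that for \emph{any} Cartan geometry---flat or not---the adjoint bundle $B_{\mathfrak g}$ over $W$ always carries a holomorphic (not necessarily flat) connection: this is Lemma~\ref{connection}, proved by extending the structure group to $G$ and combining $\omega$ with the Maurer--Cartan form. Restricting to the projective simply connected Calabi--Yau $M$, one then appeals to Biswas's theorem (a vector bundle admitting a holomorphic connection over such a manifold also admits a \emph{flat} one), whence $\bar B_{\mathfrak g}$ is trivial and the proof finishes exactly as in (iii). GAGA alone does not produce the flat connection; the two-step passage ``Cartan form $\Rightarrow$ holomorphic connection on $B_{\mathfrak g}$'' and ``Biswas $\Rightarrow$ flat'' is the missing idea.

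For (i), your Killing-field sketch is the right outline when $M$ is \emph{non}-projective (so $a(M)<\dim M$), but it yields nothing when $M$ is projective, since Theorem~\ref{result} then gives orbit dimension $\geq 0$. The paper covers the projective subcase of (i) with the Biswas argument of (ii) (which does not use the algebraic-type hypothesis at all), and uses the Killing-field route only in the non-projective case. Note also that Theorem~\ref{result} is stated for compact ambient manifolds; when $N$ is arbitrary one cannot quote it on $W$. One must instead run its proof and observe that the $Kill^{loc}$-invariant meromorphic functions produced by Rosenlicht, when restricted to the compact slice $M\times\{t\}$, are meromorphic on $M$ and hence cannot separate points there; this is how one obtains a local Killing field tangent to the slice before applying Amores--Nomizu.
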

  
  After finishing  this paper, we learnt from Benjamin McKay that himself and  Indranil Biswas just succeeded to adapt the proof of~\cite{BM} for compact K\"ahler Calabi-Yau manifolds. Their  new unpublished result  uses Bogomolov's $T$-stability theory for coherent sheaves.
 
 \section{Killing fields of Cartan geometries}   \label{context}

 Let $M$ be a complex manifold  endowed with a Cartan geometry of algebraic type modeled on $G/P$.
 
 We can assume without loss  of generality that $P$ contains no nontrivial normal subgroups  of $G$. Indeed, if a  nontrivial  normal subgroup $N$ of $G$
 lies in $P$, then $M$ also admits a Cartan geometry $(B/N, \omega')$  locally modeled on $G'/P'$, where $G'=G/N$ and $P'=P/N$ (see~\cite{Sharpe}, chapter 4).

 Remark that $\omega$ defines a   holomorphic  isomorphism   $TB \simeq B \times \mathfrak  g$, where $TB$ is the holomorphic tangent bundle to $B$.
 
 The curvature  of the Cartan geometry $(B, \omega)$ is a $\mathfrak  g$-valued (holomorphic)  2-form   on $B$  defined by $\Omega(X,Y)=d \omega (X,Y) + \lbrack  \omega(X), \omega (Y) \rbrack,$ for all tangent vector fields $X,Y$ to $B$.  It is well known that $\Omega(X,Y)$ vanishes  if $X \in \mathfrak  p$ (see~\cite{Sharpe}, chapter 5,  corollary 3.10).
 
 The Cartan geometry $(B, \omega)$ is said to be {\it flat}, if the curvature vanish to all of $B$. 
 
 Since  $TB \simeq B \times \mathfrak  g$, the curvature $\Omega$ is completely determined by a  $P$-equivariant function $K : B \to V,$ where $V=\wedge^2(\mathfrak g/ \mathfrak p)^*\otimes \mathfrak g$ and $P$ acts linearly on $V$ by
 
 $$p \cdot l (u,v)=(Ad(p) \circ  l)( \bar {Ad}(p^{-1})u, \bar{Ad}(p^{-1})v),$$
 
 for all $p \in P$, with $\bar{Ad}$ being the induced $P$-action on $\mathfrak{g} / \mathfrak{p}$ coming from the adjoint action $Ad(P)$ (see~\cite{Sharpe}, chapter 5, lemma 3.23).
 
Following~\cite{Melnick} we define for all $m \in \NN$, the $m$-jet of $K$ with respect to $\omega$:

$$J^mK : B \to Hom(\otimes^m \mathfrak{g}, V) $$

$$(J^m K)(b) : X_{1} \otimes X_{2} \otimes \ldots \otimes X_{m} \to (\tilde X_{1}  \cdot \tilde X_{2} \cdot \ldots \tilde X_{m} \cdot K)(b),$$

where $X_{i} \in T_{b}B$ and $\tilde{X}_{i}$ is the unique $\omega$-constant  (holomorphic) vector field on $B$ which extends $X_{i}$.

The $m$-jet of $K$ is $P$-equivariant:

$$J^mK(bp^{-1})= p \cdot (J^mK(b) \circ Ad^mp^{-1}),$$
where $Ad^m$ is the tensor representation $\otimes^m \mathfrak g$ of $AdP$ (see proposition 3.1 in~\cite{Melnick}).

\begin{defi} A (local) automorphism of a Cartan geometry $(B, \omega)$ on $M$ is a (local) biholomorphism $f$ of $M$ which lifts to a (local) bundle automorphism
of  $B$ preserving $\omega$.
\end{defi}

Conversely, a local bundle automorphism of $B$ preserving $\omega$ is the lift of a unique  local biholomorphism of $M$~\cite{Melnick} (proposition 3.6).

The pseudogroup of local automorphisms of a Cartan geometry is a Lie pseudogroup $Is^{loc}$  (of finite dimension) generated by the  Killing Lie algebra of the Cartan geometry $Kill^{loc}$.  We will say that $m,n \in M$ are in the same $Kill^{loc}$-orbit if $n$ can be reached from $m$ by flowing along a finite sequence of local
Killing fields. Locally the orbits of $Is^{loc}$ and the orbits of $Kill^{loc}$ are the same.

The following  well known lemma will be useful in the sequel (see also~\cite{Sharpe}, Appendix A). 

\begin{lemma} \label{connection} Let $M$ be a complex manifold endowed with a holomorphic Cartan geometry $(B, \omega)$. The holomorphic $\mathfrak{g}$-bundle $B_{\mathfrak{g}}$ over  $M$ corresponding   to $B$ by the adjoint action of the structure group $P$ on $\mathfrak{g}$ admits a holomorphic affine connection.
\end{lemma}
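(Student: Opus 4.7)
The plan is to reduce this to a completely standard construction in the theory of Cartan geometries: enlarge the principal $P$-bundle $B$ to a principal $G$-bundle by extending the structure group, interpret $\omega$ as a genuine principal connection on that larger bundle, and then use the associated bundle construction to induce a linear connection on $B_{\mathfrak g}$.

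First I would form the extended bundle $\hat B := B \times_P G$, where $P$ acts on $G$ by left translation through the inclusion $P \hookrightarrow G$. This is a holomorphic principal $G$-bundle over $M$, and the map $b \mapsto [b,e]$ realizes $B$ as a reduction of $\hat B$ to the subgroup $P$. Next I would extend the Cartan form to $\hat B$ in the standard way: on the product $B \times G$ consider the $\mathfrak g$-valued holomorphic $1$-form
\[
\tilde\omega_{(b,g)} \;=\; \mathrm{Ad}(g^{-1})\,\pi_1^{*}\omega \;+\; \pi_2^{*}\theta_{MC},
\]
where $\theta_{MC}$ is the left Maurer-Cartan form of $G$. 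Using axioms (ii) and (iii) of the Cartan geometry one verifies that $\tilde\omega$ is invariant under the diagonal $P$-action $(b,g)\cdot p = (bp, p^{-1}g)$ and vanishes on the vectors tangent to $P$-orbits, so it descends to a well-defined holomorphic $\mathfrak g$-valued $1$-form $\hat\omega$ on $\hat B$. One then checks the two axioms of a principal connection: that $\hat\omega$ reproduces fundamental vertical fields of the $G$-action and satisfies $R_g^{*}\hat\omega = \mathrm{Ad}(g^{-1})\hat\omega$; both follow directly from the formula and from the $P$-equivariance of $\omega$.

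Once $\hat\omega$ is a holomorphic principal $G$-connection on $\hat B$, the classical associated bundle construction endows every $\hat B \times_G V$ with a holomorphic linear connection, defined by horizontally lifting tangent vectors on $M$ via $\hat\omega$ and differentiating equivariant functions $\hat B \to V$. Applying this to $V = \mathfrak g$ with the adjoint representation, and using the canonical isomorphism
\[
\hat B \times_G \mathfrak g \;=\; (B \times_P G) \times_G \mathfrak g \;\cong\; B \times_P \mathfrak g \;=\; B_{\mathfrak g},
\]
produces the desired holomorphic linear (``affine'') connection on $B_{\mathfrak g}$.

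The only place that requires any work is the verification that $\tilde\omega$ descends and that $\hat\omega$ genuinely satisfies the principal connection axioms, but this is routine bookkeeping with the Cartan axioms, and is written out explicitly in Sharpe's Appendix~A, which the author already cites. There is no real obstacle; the content of the lemma is essentially that a Cartan connection \emph{is} a principal connection after extending the structure group, and this principal connection then descends to the adjoint vector bundle in the usual functorial way.
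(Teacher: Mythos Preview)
Your proposal is correct and follows essentially the same route as the paper: extend $B$ to the principal $G$-bundle $B\times_P G$, descend $\mathrm{Ad}(g^{-1})\pi_1^*\omega + \pi_2^*\omega_{MC}$ to a principal connection there, and pass to the associated bundle $B_{\mathfrak g}$. If anything, you are slightly more explicit than the paper (you spell out the isomorphism $\hat B\times_G\mathfrak g\cong B_{\mathfrak g}$ and note that only axioms (ii) and (iii) are used, which the paper records separately as Remark~\ref{hyp}(i)).
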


\begin{proof}
 It is enough  to prove  that the holomorphic principal $G$-bundle $B_{G}=B \times_{P} G$ obtained by extending the structure group of $B$ using the inclusion map of $P$ in $G$ admits a holomorphic  connection~\cite{Kobayashi}.
 
 Consider $\omega_{MC} :TG \to G \times \mathfrak{g}$ be the $\mathfrak g$-valued Maurer-Cartan one form on $G$ constructed using the left invariant vector fields. Consider the $\mathfrak{g}$-valued holomorphic
 one form $$\tilde \omega(b,g)=Ad(g^{-1}) \pi_{1}^*\omega + \pi_{2}^* \omega_{MC},$$
 on $B \times G$, where the $\pi_{i}$ are the canonical projections on the two factors. The one form $\tilde \omega$ descends on $B_{G}$ to a $\mathfrak{g}$-valued
 holomorphic one form which defines a holomorphic connection.
 \end{proof}

\begin{remark}  \label{hyp} (i) The proof of Lemma~\ref{connection}  only requires   points  (ii)  and (iii) in the definition of a  Cartan geometry.

(ii) If the Cartan geometry $(B, \omega)$ is flat, then the holomorphic affine connection constructed in Lemma~\ref{connection} is easily seen to be flat.
\end{remark}

 \section{Cartan geometries and algebraic dimension}
 
  The maximal number of algebraically  independent meromorphic functions on a complex manifold $M$ is called {\it the algebraic dimension} $a(M)$ of $M$. 
     
     Recall that a theorem of Siegel proves that a complex $n$-manifold $M$ admits at most $n$ algebraically  independent  meromorphic functions~\cite{Ueno}.
      Then $a(M) \in \{0, 1, \ldots, n\}$ and for algebraic manifolds $a(M)=n$.

   We will say that two points in  $M$ are in the same {\it fiber of the algebraic reduction} of $M$ if any meromorphic  function on $M$ takes the same value at  the two points. There
   exists some open dense set in  $M$ where the fibers of the algebraic  reduction are the fibers of a holomorphic fibration  on an algebraic manifold of dimension $a(M)$ and any 
   meromorphic function on $M$ is the pull-back of a meromorphic function on the basis~\cite{Ueno}.

   Theorem~\ref{result} shows  that the fibers of the algebraic reduction are in the same orbit of the pseudogroup of local isometries for any holomorphic Cartan geometry  of algebraic type  on $M$. 
   Let's give the proof.

\begin{proof}
(i) For each positive  integer  $m$  we consider the $m$-jet  $J^mK$ of  the curvature of  $(B, \omega)$. This is a $P$-equivariant holomorphic  map 
    $$J^mK : P  \to W= Hom(\otimes^m \mathfrak{g}, V).$$
    
    The proof of theorem 4.1 in~\cite{Melnick} shows that two points in $M$ are in the same $Kill^{loc}$-orbit  if and only if the corresponding fibers of $B$ are sent on the same $P$-orbit in $W= Hom(\otimes^m \mathfrak{g}, V)$, for a certain  $m$ large enough.
   
    Since the $Ad(P)$-action on $W$ is supposed to be algebraic,  Rosenlicht's  theorem (see~\cite{Ro}) shows that there exists  a $P$-invariant stratification 
     
     $$W=Z_0 \supset \ldots \supset Z_l,$$
      such that 
     $Z_{i+1}$ is Zariski closed in  $Z_i$, the quotient of $Z_{i} \setminus Z_{i+1}$ by $P$ is a complex manifold  and 
     rational $P$-invariant functions  on  $Z_i$ separate
      orbits in  $Z_i \setminus Z_{i+1}$.
     
     Consider  the open dense $Kill^{loc}$-invariant subset $U$ of $M$, where $J^mK$ is of constant rank and the 
      image of  $B|_{U}$ through  $J^mK$ is contained in the  maximal subset  $Z_i \setminus Z_{i+1}$ of the stratification which intersects the image of $J^mK$. Then $U=M \setminus S$, with $S$ a compact analytic subset in $M$, and 
      the orbits of $Kill^{loc}$ in $U$
      are the fibers of a fibration of constant rank.
      
 (ii)      If $n$ and $n'$ are two points in $U$ which are not in the same $Kill^{loc}$-orbit, then the corresponding  fibers of $B|_{U}$ are sent by $J^mK$  on two
      distinct $P$-orbits in $Z_i \setminus Z_{i+1}$. By Rosenlicht's theorem there exists a $P$-invariant  rational function $F : Z_i \setminus Z_{i+1} \to \CC,$
      which takes distincts values at these two orbits.

     The meromorphic function  $F \circ J^mK : B  \to \CC$ is $P$-invariant and descends in a $Kill^{loc}$-invariant meromorphic function on 
     $M$ which takes distincts values at  $n$ and at $n'$.
     
    Consequently, the complex codimension in $U$ of the $Kill^{loc}$-orbits is $\leq a(M)$, which finishes the proof.
    \end{proof}

 \section{Cartan geometries and  simply-connected manifolds}

We prove first the corollary~\ref{corollaire}.

\begin{proof}

Assume, by contradiction, that  the complex manifold $M$ bearing the Cartan geometry  $(B, \omega)$ verifies   the hypothesis. Since $a(M)=0$, theorem~\ref{result} implies  $(B, \omega)$ is locally homogeneous on an open dense set $U$ in $M$. As $M$
is simply connected, elements  in  the Killing Lie algebra $\mathcal G$ extend to all of  $M$~\cite{Amores,Gro, Melnick, Nomizu}: the unique  connected simply connected complex Lie group $G'$ associated to $\mathcal G$  acts isometrically on $M$ with an open dense orbit. The open dense orbit $U$  identifies with a homogeneous space $G'/H$, where $H$ is a closed subgroup of $G'$.

Consider  $X_{1}, X_{2}, \ldots, X_{n}$  global Killing fields on $M$ which are linearly independent at some point of the open orbit $U$. Consider the function $vol(X_{1},X_{2}, \ldots, X_{n})$, where  $vol$ is the holomorphic volume form associated to a nontrivial section of the canonical bundle. Since $vol(X_{1}, X_{2}, \ldots, X_{n})$ is a holomorphic function on $M$, it
is a nonzero  constant (by maximum principle) and, consequently, $X_{1}, X_{2}, \ldots,  X_{n}$ are linearly independent on $M$. Hence Wang's theorem~\cite{Wang}  implies  that $M$ is a quotient of a $n$-dimensional
connected simply connected complex Lie group $G_{1}$ by a discrete subgroup. Since $M$ is simply connected, this discrete subgroup has to be trivial and $M$ identifies with
$G_{1}$. But there is no compact  simply connected complex Lie group: a contradiction.
\end{proof}

\begin{theorem} \label{dim3}  Let $M$ be a compact connected simply connected complex $n$-manifold without nonconstant meromorphic functions and admitting a holomorphic Cartan geometry  $(B,\omega)$ of algebraic type. Then $M$ is biholomorphic to an equivariant compactification of $\Gamma \backslash G'$, where $\Gamma$ is a discrete noncocompact subgroup in  a complex Lie group $G'$.
\end{theorem}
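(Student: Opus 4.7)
The plan is to adapt the argument of Corollary~\ref{corollaire} to a setting in which no global holomorphic volume form on $M$ is available. The conclusion is accordingly weaker: $M$ is shown to be an equivariant compactification of a Lie-group quotient, rather than a complex Lie group itself.

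First, since $a(M)=0$, Theorem~\ref{result} forces the $Kill^{loc}$-orbits on $M \setminus S$ to have complex codimension zero, so there is a single open orbit. In other words $(B,\omega)$ is locally homogeneous on the open dense set $U := M \setminus S$, the complement of a compact analytic subset.

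Second, since $M$ is simply connected, the Amores--Nomizu extension theorem (used as in Corollary~\ref{corollaire}) implies that every local Killing field extends to a global holomorphic Killing field on $M$. Let $\mathcal{G}$ denote the resulting finite-dimensional complex Lie algebra of global Killing fields, and let $G'$ be the connected simply connected complex Lie group with Lie algebra $\mathcal{G}$. Compactness of $M$ makes all elements of $\mathcal{G}$ complete, so $\mathcal{G}$ integrates to a holomorphic $G'$-action on $M$ with open dense orbit $U$. Fixing $x_0 \in U$, we have $U \simeq G'/H$, where $H \subset G'$ is the closed complex Lie isotropy subgroup.

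The main step is to show that $H$ is discrete in $G'$, so that $U$ may be written as $\Gamma \backslash G'$ for a discrete subgroup $\Gamma$. I would pick a lift $b_0 \in B$ of $x_0$; the $\omega$-trivialization $TB \simeq B \times \mathfrak{g}$ identifies the isotropy subalgebra $\mathfrak{h} \subset \mathcal{G}$ with a subspace of $\mathfrak{p}$, since the lift of a Killing field vanishing at $x_0$ must be vertical over $x_0$. A nonzero element of $\mathfrak{h}$ would generate a one-parameter local symmetry fixing $x_0$; feeding the resulting extra direction in the curvature jet $J^m K$ into the Rosenlicht stratification from the proof of Theorem~\ref{result} at a higher value of $m$ would produce a nonconstant $Kill^{loc}$-invariant meromorphic function on $M$, contradicting $a(M)=0$.

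Once $H$ is discrete, the open orbit is $\Gamma \backslash G'$; since $S = M \setminus U$ is $G'$-invariant (the $G'$-action permutes $Kill^{loc}$-orbits), the $G'$-action extends to all of $M$, exhibiting $M$ as an equivariant compactification of $\Gamma \backslash G'$. Finally, if $\Gamma$ were cocompact one would have $S = \emptyset$ and $M = \Gamma \backslash G'$; simple-connectedness of $M$ forces $\Gamma = \{e\}$ and $M = G'$, a compact simply connected complex Lie group of positive dimension, which does not exist. Hence $\Gamma$ is noncocompact. The hard part will be the discreteness step: extracting a genuinely meromorphic $G'$-invariant function from a positive-dimensional isotropy requires combining Rosenlicht-type invariant theory with the Cartan-geometric framework, and carefully exploiting that the Rosenlicht invariants naturally live on curvature strata rather than directly on $M$.
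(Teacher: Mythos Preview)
Your overall architecture matches the paper's: apply Theorem~\ref{result} to get local homogeneity on a dense open $U$, extend local Killing fields globally by Amores--Nomizu and integrate to a $G'$-action with open orbit $U \simeq G'/H$, reduce to showing $H$ is discrete, and then finish with the compact-simply-connected-complex-Lie-group obstruction to rule out cocompactness of $\Gamma$. The divergence is entirely in the discreteness step, and there your argument has a genuine gap.

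The mechanism you propose does not produce anything. The Rosenlicht invariants in Theorem~\ref{result} are $P$-invariant rational functions on the curvature target $W$; pulled back by $J^mK$ they become $Kill^{loc}$-invariant meromorphic functions on $M$, and by construction they \emph{separate $Kill^{loc}$-orbits}. Once you have already established that there is a single open dense orbit, every such invariant is automatically constant on $U$, hence on $M$, irrespective of whether the isotropy at $x_0$ is positive-dimensional. A nonzero $X \in \mathfrak{h}$ lifts to a vertical vector field over $x_0$ and moves points only inside the fiber of $B$; it does not create any variation of $J^mK$ along $M$, and passing to higher jet order $m$ cannot manufacture such variation. So no nonconstant meromorphic function comes out of this, and the intended contradiction with $a(M)=0$ is not obtained.

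The paper uses $a(M)=0$ a second time, but applied to a different map. Because the Killing algebra $\mathcal{G}$ is now global, one can form the assignment $u \mapsto \mathcal{H}_u$ (the isotropy subalgebra at $u$), which gives a meromorphic map from $M$ to the Grassmannian of $d$-planes in $\mathcal{G}$, where $d = \dim_{\mathbb C}\mathfrak h$. The hypothesis $a(M)=0$ forces this Grassmannian-valued map to be constant; combined with the equivariance $\mathcal{H}_{gu} = Ad(g)\mathcal{H}_u$ one gets that $\mathcal{H}_u$ is $Ad(G')$-invariant, i.e.\ an ideal. The corresponding Killing fields then vanish at every point of $U$, hence on all of $M$, which is impossible for nonzero elements of $\mathcal{G}$. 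This Grassmannian-of-isotropies trick is the missing idea in your proposal.
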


\begin{proof}
Since $a(M)=0$, theorem~\ref{result} implies  $(B, \omega)$ is locally homogeneous on an open dense set $U$. As before, the extension property of local Killing fields implies $U$ is 
a complex homogeneous space $G'/H$, where $G'$ is a connected simply connected complex Lie group and $H$ is a closed subgroup  in $G'$.

We show now that $H$ is a discrete subgroup of $G'$. Assume by contradiction the Lie algebra of $H$  is nontrivial.
Take at any point $u \in U$, the isotropy subalgebra $\mathcal{H}_{u}$ (i.e. the Lie subalgebra of  Killing fields vanishing at $u$).
Remark that $\mathcal{H}_{gu}=Ad(g) \mathcal{H}_{u}$, for any $g \in G'$ and $u \in U$, where $Ad$ is the adjoint representation. Let $d$ be the complex dimension of $\mathcal{H}_{u}$.

The map $u \to \mathcal{H}_{u}$ is a meromorphic map from $M$ to the grassmanian of $d$-dimensional vector spaces in $\mathcal G$. Since $M$ doesn't admit  nontrivial 
meromorphic function, this map has to be constant. It follows that $\mathcal {H}_{u}$ is $Ad(G')$-invariant and $H$ is a normal subgroup of $G'$: a contradiction, since the $G'$-action
on $M$ is faithful. Thus $G'$ is of dimension $n$ and $H$ is  a discrete subgroup  $\Gamma$ in $G'$.

As $M$ is simply connected,  $U$ has to be strictly contained  in $M$ and $M$ is an equivariant compactification of $\Gamma  \backslash G'$.
\end{proof}

We don't know if such compactifications of $ \Gamma \backslash G'$ admit  holomorphic Cartan geometries, but the previous result has the following application.

 Recall that  an open question asks  whether  the $6$-dimensional real sphere $S^6$ admits  complex structures or no. In this context, we have the following:

\begin{cor} If $S^6$ admits a complex structure $M$, then $M$  doesn't admit holomorphic Cartan geometries of algebraic type.
\end{cor}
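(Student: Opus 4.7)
The plan is to derive a contradiction by combining Theorem~\ref{dim3} with two classical external ingredients: the theorem of Campana, Demailly and Peternell that any complex structure on $S^{6}$ has vanishing algebraic dimension, and the non-parallelizability of $S^{6}$ (only $S^{1}$, $S^{3}$ and $S^{7}$ are parallelizable).

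Suppose, for contradiction, that $M=(S^{6},J)$ admits a holomorphic Cartan geometry of algebraic type. Since $a(M)=0$, Theorem~\ref{dim3} produces a connected simply connected complex Lie group $G'$ of complex dimension $3$ and a discrete subgroup $\Gamma\subset G'$ such that $M$ is an equivariant compactification of $\Gamma\backslash G'$. The complement $M\setminus(\Gamma\backslash G')$ is a proper analytic subset of $M$, hence of real codimension at least two, and its removal does not alter $\pi_{1}$. Combined with the simple connectivity of both $S^{6}$ and $G'$, this forces $\Gamma=\{1\}$, and we conclude that $G'$ embeds as an open dense subset $U\subset M$.

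Let $X_{1},X_{2},X_{3}$ be a basis of left-invariant holomorphic vector fields on $G'$, viewed through the global $G'$-action as holomorphic vector fields on all of $M$. They trivialize $T^{1,0}M|_{U}$, and their wedge $s=X_{1}\wedge X_{2}\wedge X_{3}\in H^{0}(M,-K_{M})$ is nowhere vanishing on $U$. If $s$ has no zero on $M$, then it trivializes $-K_{M}$ and $(X_{1},X_{2},X_{3})$ becomes a global holomorphic frame of $T^{1,0}M$; the underlying real tangent bundle $TS^{6}$ would then be trivial, contradicting the classical non-parallelizability of $S^{6}$.

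It remains to exclude the case where $s$ vanishes along a nonempty effective divisor $D\subset M\setminus U$, with $\mathcal{O}_{M}(D)\cong -K_{M}$. This is the main obstacle of the proof. The plan here is to combine $a(M)=0$ with the $G'$-invariance of $s$: any additional independent section of $-K_{M}$ would produce, by division by $s$, a non-constant meromorphic function on $M$, forcing $h^{0}(M,-K_{M})=1$; and the $G'$-invariance of $s$ forces each irreducible component of $D$ to be of the form $G'/H$ for a closed one-dimensional complex subgroup $H\subset G'$, hence a compact complex homogeneous surface inside $(S^{6},J)$. A case-by-case inspection of the (few) three-dimensional simply connected complex Lie groups, together with the cohomological vanishings $H^{2}(S^{6},\ZZ)=H^{4}(S^{6},\ZZ)=0$, rules out the existence of such a compact homogeneous complex surface inside a hypothetical complex $S^{6}$, bringing us back to the parallelizability contradiction of the previous paragraph.
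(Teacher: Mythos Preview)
Your argument diverges from the paper's at the final step. The paper's proof is a two-line application of Huckleberry--Kebekus--Peternell~\cite{HKP}: once Campana--Demailly--Peternell and Theorem~\ref{dim3} yield a holomorphic action of a $3$-dimensional complex Lie group $G'$ on $M$ with an open orbit, the main theorem of~\cite{HKP} directly forbids this. You try instead to reach a contradiction via the non-parallelizability of $S^{6}$, which forces you to exclude the possibility that $s=X_{1}\wedge X_{2}\wedge X_{3}$ has zeros.

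That exclusion is where the genuine gap lies. Your sketch is not a proof: you assert that each irreducible component of the zero divisor $D$ is of the form $G'/H$ with $H$ a closed one-dimensional subgroup, but $G'$-invariance of $D$ only says that the components are unions of orbit closures, not single closed orbits; and even granting homogeneity, you still need $G'/H$ to be \emph{compact}, a further restriction to be verified group by group. Finally, the promised ``case-by-case inspection\ldots\ rules out the existence of such a compact homogeneous complex surface inside a hypothetical complex $S^{6}$'' is essentially the content of the analysis in~\cite{HKP} that you are trying to bypass; the vanishings $H^{2}(S^{6},\ZZ)=H^{4}(S^{6},\ZZ)=0$ do not by themselves exclude a compact complex hypersurface (it can sit in $M$ with trivial fundamental class). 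In short, the missing step is nontrivial and amounts to reproving a substantial part of~\cite{HKP}; either carry that classification out in full, or invoke~\cite{HKP} as the paper does.
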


\begin{proof} The starting point of the proof is a result of~\cite{CDP} where it is proved that $M$ doesn't admit  nonconstant meromorphic functions. If $M$
admits a   holomorphic Cartan geometry, then the previous proof shows that $M$ supports a holomorphic action of a three-dimensional complex Lie group $G'$
with an open orbit. This is in contradiction with the main theorem  of~\cite{HKP}.
\end{proof}

\section{Cartan geometries and Calabi-Yau manifolds}

Recall that K\"ahler Calabi-Yau manifolds are K\"ahler manifolds with vanishing first (real) Chern class~\cite{Beauville}.

The aim of this section is to prove theorem~\ref{on Calabi-Yau}. We settle first the case where $M$ is simply connected.

\begin{lemma}  \label{CY} A simply connected K\"ahler  Calabi-Yau manifold of positive dimension $n$   doesn't admit holomorphic Cartan geometries of algebraic type.
\end{lemma}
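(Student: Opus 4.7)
The plan is to split according to whether $M$ is projective, and reduce each case to a Hodge-theoretic vanishing of $H^0(M, TM)$.

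Suppose first that $M$ is K\"ahler but not projective. Moishezon's theorem then forces $a(M) < n$, so Theorem~\ref{result}(ii) produces nontrivial local Killing fields: their orbits in $M \setminus S$ have dimension at least $n - a(M) \geq 1$. Since $M$ is simply connected, the Amores-Nomizu extension theorem promotes every element of $Kill^{loc}$ to a global holomorphic vector field on $M$, forcing $H^0(M, TM) \neq 0$. To contradict this, I would invoke the Beauville-Bogomolov decomposition: the simply connected compact K\"ahler manifold $M$ with $c_1(M) = 0$ is biholomorphic to a finite direct product $\prod_i F_i$ whose factors are either strict Calabi-Yau manifolds or irreducible hyperk\"ahler manifolds (the torus factor is absent by simple connectivity). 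For each factor $F$ the triviality of $K_F$ yields $TF \cong \Omega^{\dim F - 1}_F$, hence $H^0(F, TF) \cong H^{\dim F - 1, 0}(F)$; this group vanishes on strict Calabi-Yau factors by definition, and on irreducible hyperk\"ahler factors because $h^{p, 0} = 0$ for odd $p$ while $\dim F - 1$ is odd. A K\"unneth computation then gives $H^0(M, TM) = 0$, contradicting the extended Killing field.

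Suppose next that $M$ is projective. Then $a(M) = n$, so Theorem~\ref{result} is silent and a different input is required. I would apply Lemma~\ref{connection} to equip the associated vector bundle $B_{\mathfrak g}$ with a holomorphic affine connection, so that by Atiyah's theorem its rational Chern classes vanish in de Rham cohomology. The canonical surjection $B_{\mathfrak g} \twoheadrightarrow TM$ with kernel $B \times_P \mathfrak p$, induced by $\omega$, then expresses the Chern classes of $TM$ in terms of those of the adjoint subbundle $B \times_P \mathfrak p$; combined with the polystability of $TM$ coming from the Ricci-flat K\"ahler-Einstein metric supplied by Yau's theorem, one extracts a numerical contradiction. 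As the introduction indicates, this is a streamlined form of the projective argument of~\cite{BM}.

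The main obstacle is the projective case: the nonprojective case is a mechanical assembly of Theorem~\ref{result}, Amores-Nomizu, Beauville-Bogomolov, and the Hodge-theoretic vanishing, whereas the projective case demands the genuinely different inputs of holomorphic connections and sharp Chern-class constraints on a polystable tangent bundle, and it is there that the delicate part of the argument lives.
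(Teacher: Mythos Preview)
Your nonprojective case is correct and matches the paper's argument exactly; you simply spell out via Beauville--Bogomolov and Hodge theory what the paper obtains by citing~\cite{Kobayashi-Horst} for the vanishing $H^0(M,TM)=0$.

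The projective case, however, has a genuine gap. You stop at ``one extracts a numerical contradiction'' without saying which one. From Atiyah's theorem you get $c_i(B_{\mathfrak g})=0$, and the exact sequence $0\to B_{\mathfrak p}\to B_{\mathfrak g}\to TM\to 0$ yields $c_1(B_{\mathfrak p})=0$ and $c_2(B_{\mathfrak p})=-c_2(TM)$. But you have no control over $B_{\mathfrak p}$ (it need not be polystable), and Bogomolov's inequality applied to the polystable bundle $TM$ only tells you $c_2(TM)\cdot\omega^{n-2}\ge 0$, which is not a contradiction. Vanishing of rational Chern classes of $B_{\mathfrak g}$ alone does not force triviality on a simply connected projective manifold, so something stronger is needed and you have not supplied it.

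The paper closes this gap differently. It invokes Biswas's theorem~A(1) in~\cite{Biswas}: on a projective manifold whose tangent bundle has nonnegative degree (here degree zero), any holomorphic bundle admitting a holomorphic connection in fact admits a \emph{flat} one. Hence $B_{\mathfrak g}$ is flat, and since $M$ is simply connected, $B_{\mathfrak g}$ is holomorphically trivial. Choosing global sections $s_1,\dots,s_n$ whose projections to $TM$ span at one point, the section $p(s_1)\wedge\cdots\wedge p(s_n)$ of $K_M^{-1}$ is nontrivial; since $K_M$ is trivial it is nowhere zero, so $TM$ is trivial, and Wang's theorem~\cite{Wang} forces $M$ to be a complex torus, contradicting simple connectivity. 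This replaces your unspecified ``numerical contradiction'' with an explicit trivialization argument.
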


 \begin{proof} 
Assume first $M$ is nonprojective.  A  theorem  of  Moishezon~\cite{Moishezon}  shows  that the algebraic dimension of a K\"ahler  nonprojective complex manifold of dimension $n$  is $\leq n-1$. Theorem~\ref{result} implies then that the Killing
 Lie algebra of a Cartan geometry on $M$ is nontrivial. Since $M$ is simply-connected, a  nontrivial element of the Killing Lie algebra extends to a global holomorphic (Killing) vector field on $M$~\cite{Amores, Gro, Melnick, Nomizu}.
 
 But a simply connected compact Calabi-Yau manifold doesn't admit nontrivial holomorphic vector fields~\cite{Kobayashi-Horst}: a contradiction.
 
 Consider now the case where $M$ is projective.  Assume by contradiction that $M$ admits a Cartan geometry   $(B, \omega)$  locally modelled on $G/P$.

 Let $B_{\mathfrak{g}}$ (respectively $B_{\mathfrak{p}}$) be the holomorphic vector bundle over $M$  with fiber $\mathfrak{g}$ (respectively $\mathfrak{p}$) associated to $B$ and corresponding to  the action of the structure group $P$ on $\mathfrak{g}$ (respectively on $\mathfrak{p}$) by the adjoint representation.
 
The  point (i) in the definition of a Cartan geometry implies that the holomorphic tangent bundle $TM$ is isomorphic to $B_{\mathfrak{g}} / B_{\mathfrak{p}}$ which is also the holomorphic vector bundle $B_{\mathfrak{g / p}}$ corresponding to the adjoint $P$-action  on the quotient $\mathfrak{g} / \mathfrak{p}$.

Lemma~\ref{connection}    shows  that $B_{\mathfrak{g}}$ admits a holomorphic affine connection. By theorem A(1) in~\cite{Biswas}, $B_{\mathfrak{g}}$ also admits a {\it flat} holomorphic affine connection. Since $M$ is simply connected, $B_{\mathfrak{g}}$ is holomorphically  trivial . 

 Let $p$ be the canonical projection of $B_{\mathfrak{g}}$
onto $TM$. Choose   $s_{1}, \ldots, s_{n}$ global  holomorphic sections of  $B_{\mathfrak{g}}$ such that $p(s_{1}), \ldots, p(s_{n})$ span $TM$ at a chosen point in $M$.
Then $p(s_{1}) \wedge \ldots \wedge p(s_{n})$ is a nontrivial holomorphic section of $K_{M}^{-1}$, where $K_{M}$ is the canonical bundle of $M$. Since $K_{M}$ is trivial,
the section $p(s_{1}) \wedge \ldots \wedge p(s_{n})$ doesn't vanish on $M$. Consequently, $p(s_{1}), \ldots, p(s_{n})$ trivialize $TM$. Wang's theorem~\cite{Wang} implies that $M$
is a complex torus: a contradiction (for $M$ is supposed to be simply connected).
 \end{proof}
 
 We give now the proof of theorem~\ref{on Calabi-Yau}.
 
 \begin{proof} Let $M$ be a K\"ahler  Calabi-Yau manifold bearing a holomorphic Cartan geometry  $(B, \omega)$. It is known that, up to  a finite unramified
 cover,  $M$ is biholomorphic to a product of nontrivial simply connected K\"ahler Calabi-Yau manifolds $M_{1}, M_{2}, \ldots M_{l}$ and a complex torus $\CC^p / \Lambda$, with $\Lambda$ being a lattice in $\CC^p$~\cite{Beauville}.
 
 Remark that the proof of lemma~\ref{CY} still  works if  at least one of the simply connected factors $M_{i}$ is nonprojective. Indeed, in this case meromorphic  functions on $M$ don't separate points
 in the  fibers  $M_{1} \times M_{2} \times \ldots \times M_{l} \times \{t \}$, with $t \in \CC^p / \Lambda$,  and the proof of theorem~\ref{result} shows that the foliation given by the  $Kill^{loc}$-orbits  intersects a generic fiber, say  $M_{1} \times M_{2} \times \ldots \times M_{l} \times \{t \}$,
 on a  foliation with positive dimensional  leaves. Hence  we can choose a  local Killing field  $X$ defined on a connected simply connected  open set $U$ in $M$ which is tangent to $M_{1} \times M_{2} \times \ldots \times M_{l} \times \{t \}$ at a given  point 
 $m  \in U \cap (M_{1} \times M_{2} \times \ldots \times M_{l} \times \{t \})$ and $X(m) \neq 0$.

 By Amores-Nomizu's extendibility result, $X$ extends to  a holomorphic Killing vector field $\tilde X$ on $M_{1} \times M_{2} \times \ldots \times M_{l} \times U'$, with $U'$ being a simply connected open set in $\CC^p /  \Lambda$ containing $t$. Consider 
 the image of $\tilde X$ through the projection on the first factor of the canonical decomposition $$T(M_{1} \times M_{2} \times \ldots \times M_{l} \times U') \simeq \pi_{1}^*T(M_{1} \times M_{2} \times \ldots \times M_{l}) \oplus \pi_{2}^*TU'$$ where the $\pi_{i}$ are the canonical projections on the simply connected factor and on the torus.
 
 We constructed  a  holomorphic vector field on the simply connected Calabi-Yau manifold  $M_{1} \times M_{2} \times \ldots \times M_{l} \times \{t \}$~ which doesn't vanish at $m$. This is in contradiction with~\cite{Kobayashi-Horst} as before.
 
 Consider now the remaining  case where all simply connected factors $M_{i}$ are projective. Assume, by contradiction,  that $M_{1} \times M_{2} \times \ldots \times M_{l}$ is nontrivial.
 
 Let $B_{\mathfrak{g}}$ (respectively $B_{\mathfrak{p}}$) be the holomorphic vector bundle over $M$  with fiber $\mathfrak{g}$ (respectively $\mathfrak{p}$) associated to $B$ and corresponding to  the action of the structure group $P$ on $\mathfrak{g}$ (respectively on $\mathfrak{p}$) by the adjoint representation. As before $TM$ is isomorphic to $B_{\mathfrak{g}} / B_{\mathfrak{p}}$. Let $p$ be the canonical projection of $B_{\mathfrak{g}}$ onto $TM$.
 
 Let $p_1$ (respectively $p_2$) be the projections on the first factor (respectively second factor) of the decomposition $$TM = \pi_{1}^*T(M_{1} \times M_{2}  \times \ldots \times M_{l}) \oplus \pi_{2}^*T(\CC^p /  \Lambda).$$
 
 Then $p_1 \circ p : B_{\mathfrak{g}} \to  \pi_{1}^*T(M_{1} \times M_{2}  \times \ldots \times M_{l})$ is a surjective     morphism of vector bundles. When restricted to a fiber $M_{1} \times M_{2} \times \ldots \times M_{l} \times \{t \}$, this induces a surjective morphism  $\bar{B}_{\mathfrak{g}} \to T(M_{1} \times M_{2}  \times \ldots \times M_{l})$, where
 $\bar{B}_{\mathfrak{g}}$  denotes the restriction of $B_{\mathfrak{g}}$ to $M_{1} \times M_{2} \times \ldots \times M_{l} \times \{t \}$.

 On the other hand, Lemma~\ref{connection}    shows   that  $B_{\mathfrak{g}}$ admits a holomorphic affine connection.  In particular,  $\bar{B}_{\mathfrak{g}}$ admits a holomorphic affine connection. By theorem A(1) in~\cite{Biswas}, $\bar{B}_{\mathfrak{g}}$ also admits a {\it flat} holomorphic affine connection. Since $M_{1} \times M_{2}  \times \ldots \times M_{l}$ is simply connected, $\bar{B}_{\mathfrak{g}}$ is holomorphically trivial. We obtained that $T(M_{1} \times M_{2}  \times \ldots \times M_{l})$ is a quotient of a  holomorphically trivial vector bundle. We conclude as in the proof of Lemma~\ref{CY}.
\end{proof}

Remark that, in the case where  the simply connected factor $M_{1} \times M_{2}  \times \ldots \times M_{l}$ is projective, the previous proof also  works for Cartan geometries which are not  necessarily of algebraic type.

Actually the previous proof doesn't use neither  that  the second factor of the holomorphic decomposition of a  Calabi-Yau manifold as a product is a complex torus.  In our proof this  second factor might be
any complex manifold $N$. This proves also points (i) and (ii) in theorem~\ref{produits}. Whereas  for point (iii) of theorem~\ref{produits} we give the proof here:

\begin{proof}   Consider a flat  Cartan geometry $(B, \omega)$ on $M \times N$ and the associated  vector bundle $B_{\mathfrak{g}}$ over $M \times N$.
As before  $TM$ is a quotient of  the vector bundle $\bar{B}_{\mathfrak{g}}$, where $\bar{B}_{\mathfrak{g}}$ denotes the restriction to the factor $M$ of $B_{\mathfrak{g}}$.

Since the Cartan geometry $(B, \omega)$ is supposed to be flat, Lemma~\ref{connection}  and Remark~\ref{hyp} (point (ii))  show  that $B_{\mathfrak{g}}$ and hence also $\bar{B}_{\mathfrak{g}}$ admit a flat  holomorphic affine connection. 
As $M$ is simply connected, the vector bundle $\bar{B}_{\mathfrak{g}}$ over $M$  is holomorphically trivial. 

As in the proof of Lemma~\ref{CY}, let $p$ be the canonical projection of $\bar{B}_{\mathfrak{g}}$
onto $TM$. Let $n$ be the complex dimension of $M$ and choose   $s_{1}, \ldots, s_{n}$ global  holomorphic sections of  $\bar{B}_{\mathfrak{g}}$ such that $p(s_{1}), \ldots, p(s_{n})$ span $TM$ at a chosen point in $M$.
Then $p(s_{1}) \wedge \ldots \wedge p(s_{n})$ is a nontrivial holomorphic section of $K_{M}^{-1}$, where $K_{M}$ is the canonical bundle of $M$. We get a contradiction in the case where 
$K_{M}^{-1}$ doesn't admit nontrivial holomorphic sections.

Consider now the case where  $K_{M}$ is trivial. Then
the section $p(s_{1}) \wedge \ldots \wedge p(s_{n})$ doesn't vanish on $M$. Consequently, $p(s_{1}), \ldots, p(s_{n})$ trivialize $TM$. Wang's theorem~\cite{Wang} implies that $M$
is a quotient of a nontrivial
connected simply connected complex Lie group $G_{1}$ by a discrete subgroup. Since $M$ is simply connected, this discrete subgroup has to be trivial and $M$ identifies with
$G_{1}$. But there is no nontrivial compact  simply connected complex Lie group: a contradiction.

Remark that all over the proof we didn't use the product structure of the manifold $W=M \times N$, but only the existence of a holomorphic vector bundle morphism from $TW|_{M}$ onto $TM$.
\end{proof}


\begin{thebibliography}{AA}
          
 \bibitem{Amores} {\it A. M. Amores}, Vector fields of a finite type $G$-structure, J. Differential Geom., {\bf 14(1)}, 1979, 1-6.
 
 \bibitem{Beauville} {\it A. Beauville }, Vari\'et\'es K\"ahleriennes dont la premi\`ere 
classe de Chern est nulle, J. Differential Geom. {\bf 18}, (1983), 755-782.

 
 \bibitem{Biswas} {\it I. Biswas}, Vector bundles with holomorphic connection over a projective manifold with tangent bundle of nonnegative degree, Proc. Amer. Math. Soc.,
 {\bf 126}, (1998), 2827-2834.
 
 \bibitem{BM} {\it I. Biswas, B. McKay}, Holomorphic Cartan geometries and Calabi-Yau manifolds, Arxiv.



\bibitem{CDP}  {\it F. Campana,  J-P. Demailly,  T. Peternell }, The algebraic dimension of compact complex
threefolds with vanishing second Betti number, Compositio Math., {\bf 112(1)}, (1998),  77-91. 

\bibitem{DG} {\it G. D'Ambra, M. Gromov}, Lectures on transformations groups: geometry and dynamics, Surveys
in Differential Geometry (Cambridge), (1990), 19-111.


\bibitem{D1} {\it S. Dumitrescu}, Structures g\'eom\'etriques holomorphes sur les vari\'et\'es complexes
compactes, Ann. Scient. Ec. Norm. Sup., {\bf 34(4)}, (2001), 557-571.


\bibitem{Gro}  {\it M. Gromov}, Rigid transformation groups, G\'eom\'etrie Diff\'erentielle, (D. Bernard et Choquet-Bruhat Ed.), Travaux en cours,
Hermann, Paris, {\bf 33}, (1988), 65-141.


\bibitem{HKP}  {\it A. Huckleberry, S. Kebekus,  T. Peternell }, Group actions on $S^6$ and complex structures
on $P^3(\CC)$, Duke Math. J., {\bf 102(1)}, (2000), 101-124.



\bibitem{IKO}  {\it M. Inoue, S. Kobayashi, T. Ochiai}, Holomorphic affine connections on compact complex surfaces, J. Fac. Sci. Univ. Tokyo, {\bf 27(2)}, (1980), 247-264.


\bibitem{Kobayashi} {\it S. Kobayashi}, Transformation groupes in differential geometry, Springer-Verlag, (1972).

\bibitem{Kobayashi-Horst} {\it S. Kobayashi}, The first Chern class and holomorphic symmetric tensor fields, J. Math. Soc. Japan, {\bf 32(2)}, (1980), 325-329.


\bibitem{McKay} {\it B. McKay}, Characteristic forms of complex Cartan geometries, Arxiv.


\bibitem{Melnick} {\it K. Melnick}, A Frobenius theorem for Cartan geometries, with applications, Arxiv.


\bibitem{Moishezon}  {\it B. Moishezon}, On $n$ dimensional compact varieties with $n$ independent meromorphic functions,
 Amer. Math. Soc. Transl., {\bf 63}, (1967), 51-77.


\bibitem{Nomizu}    {\it K. Nomizu}, On local and global existence of Killing vector fields, Ann. of Math. (2), 
{\bf 72}, (1960), 105-120.

 \bibitem{Ro}    {\it M. Rosenlicht},  A remark on quotient spaces, An. Acad. Sci. Brasil Ci., (1963).

\bibitem{Sharpe} {\it R. Sharpe}, Differential Geometry, Cartan's Generalization of Klein's Erlangen Program, Springer, (2000).

\bibitem{Ueno} {\it K. Ueno}, Classification theory of algebraic varities and compact complex spaces, Lect. Notes in Math., {\bf 439}, Springer-Verlag, (1975).

\bibitem{Wang} {\it H. Wang}, Complex parallelisable manifolds, Proc. Amer. Math. Soc. {\bf 5}, (1954), 771-776.

\bibitem{Yau} {\it S.-T. Yau}, On the Ricci curvature of a compact K\"ahler manifold and the complex Monge-Amp\`ere equation I, Comm. Pure Appl. Math., {\bf 31}, (1978), 339-411.



\end{thebibliography}
\end{document}